\newtheorem{theorem}{Theorem}
\theoremstyle{plain}
\newtheorem{corollary}{Corollary}
\newtheorem{lemma}{Lemma}
\numberwithin{equation}{section}
\begin{document}
\title{Fibonacci Designs}
\author{Harold N. Ward}
\address{Department of Mathematics\\
University of Virginia\\
Charlottesville, VA 22904\\
USA}
\email{hnw@virginia.edu}
\subjclass[2000]{Primary: 05B05, 05B10; Secondary: 11B39}
\keywords{Quasi-symmetric design, symmetric design, Metis design, Fibonacci
numbers, automorphism.\\
}

\begin{abstract}
A Metis design is one for which $v=r+k+1$. This paper deals with Metis
designs that are quasi-residual. The parameters of such designs and the
corresponding symmetric designs can be expressed by Fibonacci numbers.
Although the question of existence seems intractable because of the size of
the designs, the nonexistence of corresponding difference sets can be dealt
with in a substantive way.

We also recall some inequalities for the number of fixed points of an
automorphism of a symmetric design and suggest possible connections to the
designs that would be the symmetric extensions of Metis designs.
\end{abstract}

\maketitle

\section{Quasi-residual Metis designs}

In the paper \cite{MMW} by McDonough, Mavron, and the author, a method of
amalgamating nets and designs was presented that led to quasi-symmetric
designs similar to those discovered by Bracken, McGuire, and the author \cite%
{BMW}. At various stages of the construction, restrictions on the designs
involved needed to be imposed in order to make the final amalgamation have
desired regularity properties. One particular type of design was a
generalization of Hadamard designs, and we named them \emph{Metis designs},
in honor of Hadamard's ancestral home, Metz. They are block designs whose
standard parameter set $(v,b,r,k,\lambda )$ satisfies the additional
relation $v=r+k+1$. Symmetric Metis designs are indeed Hadamard designs. The
family of Metis designs $\mathcal{M}$ has the following property: regard the
parameter set for any design as a point in $\mathbb{R}^{5}$ on the variety $%
\mathcal{D}$ defined by the two standard design relations $vr=bk$ and $%
r(k-1)=\lambda (v-1)$. Then if a design belongs to $\mathcal{M}$, there is a
line in $\mathcal{D}$ through the corresponding point such that all the
points on that line belong to $\mathcal{M}$. There are other common families
of designs with such a linear property. The nature of lines in $\mathcal{D}$
has been explored in the somewhat speculative preprint \cite{WD}, and
Section \ref{WD} describes some of the results.

It seems a natural question to ask for Metis designs that are also
quasi-residual. The parameters would satisfy the two equations%
\begin{eqnarray*}
v &=&r+k+1 \\
r &=&k+\lambda ,
\end{eqnarray*}%
along with the two standard equations. Solving the four equations in terms
of $r$, we get $k=(\sqrt{(5r+4)r}-r)/2$. Let $d=\gcd (5r+4,r)$. Then $d|4$,
and $(5r+4)/d$ and $r/d$ must separately be squares. Reading modulo 4, we
find that $d=2$ will not work, and so $d=1$ or 4. Thus $5r+4$ and $r$
themselves must be squares. Put $5r+4=x^{2}$ and $r=y^{2}$, with $x,y>0$.
Let $F_{t}$ and $L_{t}$ be the $t$-th Fibonacci and Lucas numbers,
respectively, starting with $F_{0}=0$ and $L_{0}=2$ (see the books by Koshy 
\cite{K} or Vajda \cite{V}, for example, which were sources for several
other citations). One has

\begin{lemma}
\cite[Lemma 2]{LJ} For some $t\geq 1$, $x=L_{2t}$ and $y=F_{2t}$.
\end{lemma}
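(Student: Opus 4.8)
The plan is to translate the statement into a single Pell-type equation and then identify its solutions with even-index Fibonacci and Lucas numbers. Since $5r+4=x^{2}$ and $r=y^{2}$, eliminating $r$ yields the Diophantine equation $x^{2}-5y^{2}=4$ with $x,y>0$, and the entire content of the lemma is that the positive solutions of this equation are precisely the pairs $(L_{2t},F_{2t})$ with $t\geq 1$. So I would first reduce to analyzing $x^{2}-5y^{2}=4$, and record the parity constraint obtained by reading $x^{2}=5y^{2}+4$ modulo $4$: this gives $x^{2}\equiv y^{2}$, hence $x\equiv y\pmod 2$.

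That parity condition is exactly what makes $\alpha=\tfrac{x+y\sqrt{5}}{2}$ an algebraic integer of the ring $R=\mathbb{Z}[\phi]$, where $\phi=\tfrac{1+\sqrt{5}}{2}$. Its norm is $N(\alpha)=\alpha\overline{\alpha}=\tfrac{x^{2}-5y^{2}}{4}=1$, so $\alpha$ is a unit of $R$ of norm $+1$. I would then invoke the structure of the unit group: $\phi$ is the fundamental unit, with $N(\phi)=\phi\psi=-1$ where $\psi=\tfrac{1-\sqrt{5}}{2}$, and the Binet-type formula $\phi^{n}=\tfrac{L_{n}+F_{n}\sqrt{5}}{2}$ holds (equivalently $\phi^{n}+\psi^{n}=L_{n}$ and $\phi^{n}-\psi^{n}=F_{n}\sqrt{5}$). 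Hence every unit is $\pm\phi^{n}$, and those of norm $+1$ are exactly $\pm\phi^{2m}$, since $N(\phi^{n})=(-1)^{n}$. Because $x,y>0$ we have $\alpha>0$, forcing the $+$ sign, so $\alpha=\phi^{2m}$; then $y=F_{2m}>0$ forces $m\geq 1$, and setting $t=m$ reads off $x=L_{2t}$ and $y=F_{2t}$. The forward direction, that these pairs really do solve the equation, is just the identity $L_{n}^{2}-5F_{n}^{2}=4(-1)^{n}$ specialized to even $n$.

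The main obstacle is the completeness claim, namely that no positive solution is missed; everything else is bookkeeping with the Binet formulas. If one prefers to avoid quoting the unit theorem for $R$, I would instead give an explicit descent: from a solution with $y\geq 1$ set $x'=\tfrac{3x-5y}{2}$ and $y'=\tfrac{3y-x}{2}$, which is the effect of multiplying $\alpha$ by $\phi^{-2}=\tfrac{3-\sqrt{5}}{2}$. The parity condition makes $x',y'$ integers, the norm is preserved so $x'^{2}-5y'^{2}=4$, and the elementary inequalities $x>y$ (from $x^{2}=5y^{2}+4>y^{2}$) and $9(5y^{2}+4)>25y^{2}$ give $0\leq y'<y$ and $x'>0$. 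Iterating drives $y$ down to $0$, terminating at $(x,y)=(2,0)$, corresponding to $\phi^{0}=1$; reversing the steps then exhibits the original $\alpha$ as $\phi^{2t}$ and recovers $(L_{2t},F_{2t})$ with $t\geq 1$. Either the unit-group route or the descent closes the argument.
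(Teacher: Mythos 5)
Your argument is correct, but note that the paper itself offers no proof of this lemma at all: it is quoted verbatim from Long and Jordan \cite[Lemma 2]{LJ}, whose own treatment runs through the arithmetic of simple continued fractions (essentially the convergents of $\sqrt{5}$). What you supply is therefore a genuine, self-contained replacement rather than a variant of an argument in the paper. Your reduction to $x^{2}-5y^{2}=4$, the parity observation $x\equiv y\pmod 2$ making $\alpha=(x+y\sqrt{5})/2$ an element of $\mathbb{Z}[\phi]$ of norm $1$, and the identification of the norm-one units with $\pm\phi^{2m}$ (using that $\phi$ is the fundamental unit of the full ring of integers of $\mathbb{Q}(\sqrt{5})$, which $\mathbb{Z}[\phi]$ is since $5\equiv 1\pmod 4$) all check out; positivity of $x$ and $y$ correctly forces $\alpha=\phi^{2t}$ with $t\geq 1$, and the Binet-type expansion $\phi^{n}=(L_{n}+F_{n}\sqrt{5})/2$ reads off the conclusion. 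The explicit descent you offer as an alternative is also sound: the inequalities $0\leq y'<y$ and $x'>0$ are verified correctly, and termination at $(2,0)=\phi^{0}$ closes the induction. The unit-group route buys brevity at the cost of quoting the Dirichlet/fundamental-unit theorem for $\mathbb{Q}(\sqrt{5})$; the descent is entirely elementary and would be the preferable version if one wanted the paper to be self-contained, though citing \cite{LJ} as the author does is of course also legitimate.
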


\noindent We substitute these values for $x$ and $y$ in the design
parameters, make use of the relation $L_{2t}=F_{2t+1}+F_{2t-1}$ \cite[%
Formula (6)]{V}, and invoke the basic recurrence $F_{t+2}=F_{t+1}+F_{t}$, to
obtain%
\begin{equation*}
v=F_{m}F_{m-1}+1,\,b=F_{m+1}F_{m-1},\,r=F_{m-1}^{2},\,k=F_{m-1}F_{m-2},\,%
\lambda =F_{m-1}F_{m-3,}
\end{equation*}%
where $m$ is odd. The parameters $(v^{\prime },k^{\prime },\lambda ^{\prime
})$ for a symmetric design having the Metis design as its block residual are 
$v^{\prime }=b^{\prime }=b+1$, $r^{\prime }=k^{\prime }=r$, and $\lambda
^{\prime }=\lambda $. By the relation $F_{m+1}F_{m-1}+1=F_{m}^{2}$ \cite[%
Formula (29)]{V},%
\begin{equation*}
v^{\prime }=F_{m}^{2},\,k^{\prime }=F_{m-1}^{2},\,\lambda ^{\prime
}=F_{m-1}F_{m-3}.
\end{equation*}%
The order of this design is $n^{\prime }=k^{\prime }-\lambda ^{\prime
}=F_{m-1}^{2}-F_{m-1}F_{m-3}=F_{m-1}F_{m-2}$. Prompted by the appearance of
the Fibonacci numbers, we call a symmetric design $\mathcal{F}_{m}$ with
parameters $(v^{\prime },k^{\prime },\lambda ^{\prime
})=(F_{m}^{2},F_{m-1}^{2},F_{m-1}F_{m-3})$, $m$ odd, a \emph{Fibonacci design%
}. As we shall concentrate on these symmetric designs, we drop the dashes
for clarity.

\section{Existence of Fibonacci designs}

The design $\mathcal{F}_{3}$ is the trivial $(4,1,0)$ design whose blocks
are the singleton sets, so we may assume that $m>3$ ($m$ always odd) from
now on. There are 78 inequivalent $(25,9,3)$ designs $\mathcal{F}_{5}$, a
classification due to Denniston \cite{D}. The first such design was
presented by Bhattacharya \cite{B}, and an $\mathcal{F}_{5}$ appears as one
of the sequence of designs constructed by Mitchell \cite{M}.

The initial step is to check whether the parameter set passes the
Bruck-Ryser-Chowla criterion (see, for example, \cite[Theorem 10.3.1]{H}).
The variety parameter $v=F_{m}^{2}$ is even just when $m$ is also divisible
by 3, that is, when $m\equiv 3\ (\bmod\ 6)$. This is a consequence of the
periodicity of the $F_{t}$ modulo any given integer; see \cite{W} for
generalities. In this case, the demand is that the order $F_{m-1}F_{m-2}$ be
a square. As consecutive Fibonacci numbers are relatively prime \cite[p. 73]%
{V}, both $F_{m-1}$ and $F_{m-2}$ would have to be squares. But for $t>0$, $%
F_{t}$ is a square just at $t=1,2,12$ \cite{C}. Thus the only existing
design with $v$ even is $\mathcal{F}_{3}$.

Now suppose that $v$ is odd, so that $m\equiv \pm 1\ (\bmod\ 6)$. Then $%
v=F_{m}^{2}\equiv 1\ (\bmod\ 4)$, and the Diophantine equation of the
Bruck-Ryser-Chowla theorem is%
\begin{equation*}
Z^{2}=nX^{2}+\lambda Y^{2}=F_{m-1}F_{m-2}X^{2}+F_{m-1}F_{m-3}Y^{2}.
\end{equation*}%
Here there is an easy solution: $X=Y=1$ and then $Z=F_{m-1}$!

\subsection{Difference set development\label{subsecdiffset}}

With that obstacle to existence removed for $m\equiv \pm 1\ (\bmod\ 6)$, how
might one construct $\mathcal{F}_{m}$? A natural thing to try is to produce $%
\mathcal{F}_{m}$ as the development of a difference set (\textquotedblleft
development\textquotedblright\ for short). In what follows, $z^{\ast }$ is
the square-free part of the integer $z$.

\begin{theorem}
\label{Thmoddorder}Suppose that $\mathcal{F}_{m}$ is the development of a
difference set in a group (Abelian or not), with $m\equiv \pm 1\ (\bmod\ 6)$%
. Let $p$ be a prime dividing $F_{m}$. Then any prime dividing either $%
F_{m-1}^{\ast }$ or $F_{m-2}^{\ast }$ has odd order modulo $p$. Moreover, $%
p\equiv 1\ (\bmod\ 8)$.
\end{theorem}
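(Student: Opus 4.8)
The plan is to use the regular action of the developing group $G$, of order $v=F_m^2$, together with the fact that $p$ is coprime to the order $n=F_{m-1}F_{m-2}$ of the design. I first record the relevant congruences modulo $p$. Since $m\equiv\pm1\ (\mathrm{mod}\ 6)$, the number $v=F_m^2$ is odd and $p$ is an odd prime; and as consecutive Fibonacci numbers are coprime while $\gcd(F_m,F_{m-2})=F_{\gcd(m,2)}=F_1=1$, we have $p\nmid F_{m-1}$, $p\nmid F_{m-2}$, hence $p\nmid n$. From $F_{m+1}F_{m-1}+1=F_m^2$ \cite[Formula (29)]{V} and $F_{m+1}=F_m+F_{m-1}\equiv F_{m-1}$, I obtain $F_{m-1}^2\equiv-1\ (\mathrm{mod}\ p)$, and likewise $F_{m-2}\equiv-F_{m-1}$ gives $F_{m-2}^2\equiv-1$. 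In particular each of $F_{m-1},F_{m-2}$ has order $4$ modulo $p$, so already $p\equiv1\ (\mathrm{mod}\ 4)$.

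For the first assertion, I would choose $x\in G$ of order $p$ (possible since $p\mid v$). Under left translation $x$ acts as a fixed-point-free automorphism of $\mathcal F_m$ on both points and blocks, so in the eigenspace decomposition of the two permutation modules over $\mathbb Q(\zeta_p)$ each eigenvalue $\zeta_p^{\,i}$ occurs with multiplicity exactly $v/p$ in each module (the nonidentity powers of $x$ fix nothing). Since $x$ is an automorphism, the incidence matrix $A$ intertwines the two actions and restricts, for each $i\neq0$, to a $(v/p)\times(v/p)$ block $A_i$; because the all-ones vector lies in the $i=0$ eigenspace, the design equation $AA^{\mathsf T}=nI+\lambda J$ reduces to $A_iA_i^{*}=nI$. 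Hence $\Delta_i:=\det A_i$ satisfies $\Delta_i\overline{\Delta_i}=n^{\,v/p}$, with the entries of $A_i$ lying in $\mathbb Z[\zeta_p]$ up to denominators supported at $p$ (harmless below, as $p\nmid n$). The decisive arithmetic point is that $v/p=F_m^2/p$ is an \emph{odd} integer, since $F_m$ is odd.

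Now I would run the self-conjugacy argument. Suppose, for contradiction, that some prime $q$ with $v_q(n)$ odd has even order modulo $p$. Then the unique involution $-1$ of $(\mathbb Z/p)^{\times}$ lies in $\langle q\rangle$; and since $\mathrm{Gal}(\mathbb Q(\zeta_p)/\mathbb Q)$ is abelian, every prime $\mathfrak q\mid q$ of $\mathbb Z[\zeta_p]$ has decomposition group $\langle q\rangle$ and is therefore fixed by complex conjugation. Comparing $\mathfrak q$-adic valuations in $\Delta_i\overline{\Delta_i}=n^{\,v/p}$ gives $(v/p)\,v_q(n)=2\,v_{\mathfrak q}(\Delta_i)$, forcing $(v/p)\,v_q(n)$ to be even — impossible, as both factors are odd. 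Thus every prime dividing $n$ to an odd power, i.e. every prime dividing $F_{m-1}^{*}$ or $F_{m-2}^{*}$ (the two being coprime), has odd order modulo $p$.

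For the final congruence I combine this with the order computation. A prime of odd order modulo $p$ generates a subgroup of odd order, which lies in the group of squares, so each prime dividing $F_{m-1}^{*}$ is a quadratic residue. As square factors are irrelevant, Euler's criterion gives $1=\left(\tfrac{F_{m-1}}{p}\right)\equiv F_{m-1}^{(p-1)/2}=(F_{m-1}^2)^{(p-1)/4}\equiv(-1)^{(p-1)/4}\ (\mathrm{mod}\ p)$, whence $(p-1)/4$ is even and $p\equiv1\ (\mathrm{mod}\ 8)$. I expect the main obstacle to be making the eigenspace reduction of the second paragraph fully rigorous in the possibly non-abelian setting — pinning down $A_i$ and its determinant integrally enough to read off $\mathfrak q$-adic valuations — since it is precisely the oddness of $v/p$ that makes the self-conjugacy obstruction bite; the remaining steps are Fibonacci congruences and standard quadratic-residue bookkeeping.
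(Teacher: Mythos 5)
Your proposal is correct and reaches both conclusions, but for the crucial first assertion it takes a genuinely more self-contained route than the paper. The paper simply cites Lander's Theorem 4.4, which already asserts (for arbitrary groups) that no prime dividing $n^{\ast }$ can have even order modulo a prime $p$ dividing $v$; the only work the paper does there is the bookkeeping $n^{\ast }=F_{m-1}^{\ast }F_{m-2}^{\ast }$ from the coprimality of consecutive Fibonacci numbers. You instead reprove that criterion from scratch by the standard self-conjugacy argument: a fixed-point-free automorphism of order $p$, eigenspace blocks $A_{i}$ with $A_{i}A_{i}^{\ast }=nI$, and the $\mathfrak{q}$-adic valuation of $\Delta _{i}\overline{\Delta _{i}}=n^{v/p}$ at a conjugation-fixed prime above $q$, obstructed precisely because $v/p=F_{m}^{2}/p$ is odd. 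This is sound: your identification of ``even order mod $p$'' with self-conjugacy is right, since the cyclic subgroup $\langle q\rangle $ of $(\mathbb{Z}/p)^{\times }$ contains $-1$ exactly when its order is even, and your argument makes visible that only a fixed-point-free automorphism of prime order with $v/p$ odd is needed, not the full regular action---slightly more general than the difference-set hypothesis, at the cost of carrying out the eigenbasis and determinant bookkeeping you flag (the denominators at $p$ are indeed harmless since $q\neq p$; in the non-abelian case just be sure to act by right translation on the blocks $Dg$, since left translation need not send blocks to blocks). For the final congruence the two arguments are essentially equivalent and rest on the same two facts, namely $F_{m-1}^{2}\equiv -1\ (\bmod\ p)$ and the odd order of $F_{m-1}^{\ast }$ modulo $p$: the paper raises $a^{4}(F_{m-1}^{\ast })^{2}\equiv -1$ to the odd part $u$ of $p-1$ to exhibit an element of order $8$, while you run Euler's criterion on $\left(\frac{F_{m-1}}{p}\right)=1$; both are fine.
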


\begin{proof}
The theorem is a consequence of \cite[Theorem 4.4]{L}, which can be
paraphrased in the present situation to say that for no prime $p$ dividing $%
v $ is there a prime dividing $n^{\ast }$ that has even order modulo $p$. As 
$v=F_{m}^{2}$, such $p$ are the prime divisors of $F_{m}$. Because $F_{m-1}$
and $F_{m-2}$ are relatively prime, $n^{\ast }=F_{m-1}^{\ast }F_{m-2}^{\ast
} $, and the primes dividing $n^{\ast }$ are those dividing either $%
F_{m-1}^{\ast }$ or $F_{m-2}^{\ast }$. Thus such primes must have odd order
modulo $p$. Now $F_{m-1}^{2}+1=F_{m}F_{m-2}$, by \cite[Formula (29)]{V}
again. Put $F_{m-1}=a^{2}F_{m-1}^{\ast }$. Then $a^{4}(F_{m-1}^{\ast
})^{2}\equiv -1\ (\bmod\ p)$. Since all the primes dividing $F_{m-1}^{\ast }$
must have odd order modulo $p$, $F_{m-1}^{\ast }$ itself has odd order
modulo $p$. Thus if $u$ is the odd factor of $p-1$, then $(a^{u})^{4}\equiv
-1\ (\bmod\ p)$. Hence the multiplicative group of $\mathbb{Z}_{p}$ has
order divisible by 8, making $p\equiv 1\ (\bmod\ 8)$.
\end{proof}

\begin{corollary}
\label{Coroddorder}If $\mathcal{F}_{m}$ is a development for some $m$ with $%
m\equiv \pm 1\ (\bmod\ 6)$, then in fact $m\equiv \pm 1\ (\bmod\ 12)$.
\end{corollary}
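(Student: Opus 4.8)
The plan is to convert the prime-by-prime congruence $p\equiv 1\ (\bmod\ 8)$ supplied by Theorem~\ref{Thmoddorder}, which holds for \emph{every} prime $p$ dividing $F_m$, into a single congruence on $F_m$ itself, and then read off the constraint on $m$ from the behavior of the Fibonacci sequence modulo $8$.

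First I would record that, since $m\equiv\pm 1\ (\bmod\ 6)$, the index $m$ is not divisible by $3$, so $2\nmid F_m$ and $F_m$ is a product of odd primes. By Theorem~\ref{Thmoddorder}, each such prime is $\equiv 1\ (\bmod\ 8)$. A product of integers each $\equiv 1\ (\bmod\ 8)$ is again $\equiv 1\ (\bmod\ 8)$, so if $\mathcal{F}_m$ is a development, then necessarily $F_m\equiv 1\ (\bmod\ 8)$. This is the whole leverage point: the prime-level statement is strong enough to pin down $F_m$ modulo $8$.

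Next I would compute $F_m\ (\bmod\ 8)$ directly. The Fibonacci sequence is periodic modulo $8$ with period $12$, and over one period the values at indices $m$ with $m\equiv\pm 1\ (\bmod\ 6)$ are $F_m\equiv 1\ (\bmod\ 8)$ for $m\equiv 1,11\ (\bmod\ 12)$ and $F_m\equiv 5\ (\bmod\ 8)$ for $m\equiv 5,7\ (\bmod\ 12)$. The hypothesis $m\equiv\pm 1\ (\bmod\ 6)$ leaves exactly the four residues $m\equiv 1,5,7,11\ (\bmod\ 12)$, so the requirement $F_m\equiv 1\ (\bmod\ 8)$ eliminates precisely $m\equiv 5$ and $m\equiv 7$, leaving $m\equiv\pm 1\ (\bmod\ 12)$.

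Because the argument is this short, there is no serious obstacle; the only point needing care is the explicit reduction of the Fibonacci numbers modulo $8$, where one must confirm both that the period is $12$ and that the two ``middle'' residues $m\equiv 5,7$ yield $F_m\equiv 5$ rather than $1$. That computation is exactly what distinguishes $m\equiv\pm 1$ from $m\equiv\pm 5\ (\bmod\ 12)$ and so carries the entire content of the corollary.
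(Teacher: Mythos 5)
Your proof is correct and follows essentially the same route as the paper: deduce $F_m\equiv 1\ (\bmod\ 8)$ from the prime-level congruence of Theorem~\ref{Thmoddorder}, then use the period-$12$ residue sequence of the Fibonacci numbers modulo $8$ (which you compute correctly) to exclude $m\equiv\pm 5\ (\bmod\ 12)$. The only cosmetic difference is that the paper first notes $n^{\ast}\neq 1$ (via the square Fibonacci observation) to ensure the underlying theorem of Lander has content, a point your argument silently absorbs by citing Theorem~\ref{Thmoddorder} as stated.
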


\begin{proof}
By the observation on square Fibonacci numbers, $n^{\ast }\neq 1$. From the
theorem, all prime divisors $p$ of $F_{m}$ have $p\equiv 1\ (\bmod\ 8)$, so
that $F_{m}\equiv 1\ (\bmod\ 8)$. The period of congruences modulo 8 of the $%
F_{t}$ is 12, and the residue sequence is%
\begin{equation*}
0,1,1,2,3,5,0,5,5,2,7,1,0,1,1,\ldots .
\end{equation*}%
Thus $m\equiv \pm 1\ (\bmod\ 12)$.
\end{proof}

\noindent For instance, suppose, indeed, that $m\equiv \pm 1\ (\bmod\ 12)$.
When 5 divides $m$, 5 also divides $F_{m}$. So if $m\equiv \pm 25\ (\bmod\
60)$, then $m\equiv \pm 1\ (\bmod\ 12)$ all right; but $F_{m}$ has the prime
divisor $5\not\equiv 1\ (\bmod\ 8)$, and no $\mathcal{F}_{m}$ can be a
development.

Even if all primes $p$ dividing $F_{m}$ do have $p\equiv 1\ (\bmod\ 8)$, 
\emph{every} prime dividing $F_{m-1}^{\ast }$ or $F_{m-2}^{\ast }$ must have
odd order modulo such $p$ if $\mathcal{F}_{m}$ is a development. Here is a
consequence:

\begin{theorem}
Let $m\equiv 1\pm 36\ (\bmod\ 216)$. Then no $\mathcal{F}_{m}$ is a
development.
\end{theorem}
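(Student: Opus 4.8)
The plan is to play two consequences of Theorem~\ref{Thmoddorder} against each other at a single prime $p\mid F_m$: the number $3$ will be forced to have \emph{odd} order modulo $p$ because $3\mid F_{m-1}^{\ast}$, while at the same time it will be a quadratic nonresidue modulo $p$ and hence of \emph{even} order. Both inputs come from unpacking the hypothesis $m\equiv 1\pm 36\ (\bmod\ 216)$, that is $m\equiv 37$ or $m\equiv 181\ (\bmod\ 216)$, into two facts about the Fibonacci numbers.

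First I would compute the exact power of $3$ dividing $F_{m-1}$. Writing $v_3$ for the $3$-adic valuation and using that $3^{k}$ first divides a Fibonacci number at index $4\cdot 3^{k-1}$, so that $v_3(F_t)=k$ precisely when $4\cdot 3^{k-1}\mid t$ but $4\cdot 3^{k}\nmid t$, I would verify that for both residues $36=4\cdot 3^{2}$ divides $m-1$ while $108=4\cdot 3^{3}$ does not. Thus $v_3(F_{m-1})=3$, an odd exponent, so $3\mid F_{m-1}^{\ast}$. By Theorem~\ref{Thmoddorder}, if $\mathcal{F}_m$ is a development then every prime dividing $F_{m-1}^{\ast}$ has odd order modulo every prime $p\mid F_m$; in particular $3$ has odd order modulo each such $p$.

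Next I would exhibit a prime $p\mid F_m$ with $p\equiv 2\ (\bmod\ 3)$. Since $8\mid 216$, both residues satisfy $m\equiv 5\ (\bmod\ 8)$, and the period-$8$ pattern of the $F_t$ modulo $3$ then gives $F_m\equiv 2\ (\bmod\ 3)$. As $m$ is odd, $3\nmid F_m$, so $F_m$ cannot be a product of primes all congruent to $1$ modulo $3$; hence some prime divisor $p$ of $F_m$ has $p\equiv 2\ (\bmod\ 3)$. Assuming $\mathcal{F}_m$ is a development, Theorem~\ref{Thmoddorder} forces $p\equiv 1\ (\bmod\ 8)$, so $p\equiv 1\ (\bmod\ 4)$; together with $p\equiv 2\ (\bmod\ 3)$ this gives $p\equiv 5\ (\bmod\ 12)$. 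For such $p$ the integer $3$ is a quadratic nonresidue, and a quadratic nonresidue cannot have odd order in $\mathbb{Z}_p^{\ast}$ (an element of odd order is a square). This contradicts the odd order of $3$ found above, so no $\mathcal{F}_m$ is a development.

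I expect the valuation count to be the delicate point, since the argument requires the exponent of $3$ in $F_{m-1}$ to be \emph{odd}. The hypothesis $m\equiv 1\pm 36\ (\bmod\ 216)$ is exactly the conjunction of ``$36\mid m-1$ and $108\nmid m-1$'' with ``$m\equiv 5\ (\bmod\ 8)$''; weakening the modulus would readmit nearby classes for which the method breaks, for example $m\equiv 109\ (\bmod\ 216)$, where $108\mid m-1$ makes $v_3(F_{m-1})\ge 4$ and the parity is lost, or $m\equiv 73\ (\bmod\ 216)$, where $m\equiv 1\ (\bmod\ 8)$ gives $F_m\equiv 1\ (\bmod\ 3)$ and the needed prime $p$ may fail to exist.
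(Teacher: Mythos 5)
Your proposal is correct and follows essentially the same route as the paper: an odd $3$-adic valuation of $F_{m-1}$ (the paper cites Carmichael's sharpening of Lucas' law of repetition to get $3^3\,\|\,F_{m-1}$, matching your $v_3$ computation), the residue $m\equiv 5\ (\bmod\ 8)$ giving $F_m\equiv 2\ (\bmod\ 3)$ and hence a prime $p\mid F_m$ with $p\equiv 2\ (\bmod\ 3)$, and quadratic reciprocity combined with $p\equiv 1\ (\bmod\ 8)$ to make $3$ a nonresidue, hence of even order, modulo $p$. No substantive differences.
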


\begin{proof}
Here $m-1=36(6h\pm 1)$ for some $h$. By Lucas' law of repetition, as given
more sharply in \cite[Theorem X]{Ca} (referenced in \cite[p. 13]{R}), the
exact power of 3 dividing $F_{m-1}$ is $3^{3}$, the exact power dividing $%
F_{36}$. Thus $3|F_{m-1}^{\ast }$. In addition, $m\equiv 5\ (\bmod\ 8)$
gives $F_{m}\equiv 2\ (\bmod\ 3)$, from the sequence $0,1,1,2,0,2,2,1,0,%
\ldots $ of remainders of $F_{t}$ modulo 3. Thus for some prime $p$ dividing 
$F_{m}$, $p\equiv 2\ (\bmod\ 3)$, so that $\left(\frac{p}{3}\right)=-1$. If
an $\mathcal{F}_{m}$ is a development, Theorem \ref{Thmoddorder} implies
that $p\equiv 1\ (\bmod\ 8)$. Then $\left(\frac{3}{p}\right)=-1$, from
quadratic reciprocity. But that means 3 cannot have odd order modulo $p$.
\end{proof}

There is a similar approach for the $m\equiv -1\ (\bmod\ 12)$, for which 2
is a divisor of $F_{m-2}^{\ast }$. If 2 has even order modulo $F_{m}$, then
for some prime $p$ dividing $F_{m}$, 2 will also have even order modulo $p$,
and a development model for $\mathcal{F}_{m}$ will be ruled out. The
computation of the order is rather formidable, and the $m$ for which this
works do not seem to follow an obvious pattern. The first few are $m=35$,
47, 59, 71, 95, 107, 119, 143, 155, 167, 191, and 203. (In point of fact, $%
m=35$, 95, 119, 143, 155, and 203 are also ruled out for $\mathcal{F}_{m}$
developments by prime factors of $F_{m}$ not congruent to 1 modulo 8.)

We have $F_{m-1}^{2}\equiv -1\ (\bmod\ F_{m})$, and in addition, $%
F_{m-2}^{2}\equiv -1\ (\bmod\ F_{m})$, since $F_{m-2}\equiv -F_{m-1}\ (\bmod%
\ F_{m})$. Thus if an $\mathcal{F}_{m}$ is a development and either of $%
F_{m-1}$ or $F_{m-2}$ is square-free, one of these congruences is
contradicted by the fact that $F_{m-1}^{\ast }$ and $F_{m-2}^{\ast }$ must
have odd order modulo any prime divisor of $F_{m}$. Such a contradiction is
in fact what happens for all $m\equiv \pm 1\ (\bmod\ 12)$ with $m<1000$,
except for $m=277$, 457, 577, and 877. (The web page by B. Kelly \cite{Ke}
contains factorizations of the first thousand Fibonacci numbers, along with
tables that can be used for the first \emph{ten} thousand.) For the first
three of these $m$, the possibility that $\mathcal{F}_{m}$ exists as a
development is ruled out by a factor of $F_{m}$ not congruent to 1 modulo 8.
For $m=877$, the prime 1753 divides $F_{877}$, and $F_{875}$ is exactly
divisible by $5^{3}$. Thus 5 is in the square-free part of $F_{875}$. As the
order of 5 modulo 1753 is 584, the odd-order requirement does not hold. In
short, no $\mathcal{F}_{m}$ for $m\equiv \pm 1\ (\bmod\ 12)$ with $m<1000$
can be a development. That $\mathcal{F}_{5}$ is not one is recorded in \cite[%
Table 4-1]{L}; and that a $(169,64,24)$ design $\mathcal{F}_{7}$ cannot be a
development is presented in \cite{Ko}. Whether such a design exists at all
seems to be unknown.

All of these results suggest the conjecture that apart from the trivial
design $\mathcal{F}_{3}$, no Fibonacci design is a development of a
difference set--and this seems to depend on mysterious properties of
Fibonacci numbers.

\section{Bounds}

A standard approach to constructing symmetric designs is to prescribe a
group of automorphisms, thereby limiting the choices needed to be explored
in the construction. The investigation of the possible orbit structure of an
automorphism of specified order is an important starting point. The fact
that the cycle structures of the permutations induced on the points and
blocks by an automorphism are the same \cite{P} is a key ingredient.

There are various bounds on the number of fixed points of an automorphism of
a symmetric design; some are summarized and proved in \cite[Section 3.1]{L}.
We shall present one due to Bowler \cite[Lemma 2.5 (i)]{Bo} with essentially
his proof. This bound was later generalized by Feit \cite{F}.

\begin{theorem}
The three-block bound: Let $\sigma $ be an automorphism of a symmetric $%
(v,k,\lambda )$ design $\mathcal{D}$ of order $n=k-\lambda $, and let $%
\sigma $ have $f$ fixed points. Then if the order of $\sigma $ is at least
3, $f\leq v-3n$.
\end{theorem}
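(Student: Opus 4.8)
The plan is to produce three distinct blocks lying in a single $\sigma$-orbit and then to count fixed points according to how they meet these three blocks, exploiting the defining property of a symmetric design that any two distinct blocks intersect in exactly $\lambda $ points.

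First I would secure the three blocks, and this is the one place the hypothesis ``order at least $3$'' is really used. Since the order of $\sigma $ is the least common multiple of its cycle lengths on points, that order being at least $3$ forces at least one point-cycle of length $\geq 3$. By the cited fact \cite{P} that $\sigma $ induces the same cycle structure on blocks as on points, the induced block permutation also has a cycle of length $\geq 3$; choosing a block $B$ in such a cycle, the blocks $B_{0}=B$, $B_{1}=B^{\sigma }$, $B_{2}=B^{\sigma ^{2}}$ are three distinct members of one orbit.

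The key observation is then that a fixed point $x$ satisfies $x\in B_{0}\iff x\in B_{1}\iff x\in B_{2}$, because applying $\sigma $ to an incidence $x\in B_{i}$ yields $x=x^{\sigma }\in B_{i}^{\sigma }=B_{i+1}$. Hence every fixed point lies either in all three blocks or in none of them. I would next compute the size of the union by inclusion--exclusion: writing $t=|B_{0}\cap B_{1}\cap B_{2}|$ and using $|B_{i}|=k$ together with $|B_{i}\cap B_{j}|=\lambda $ for $i\neq j$,
\[
|B_{0}\cup B_{1}\cup B_{2}|=3k-3\lambda +t=3n+t .
\]
The fixed points inside the union are exactly the fixed points inside the triple intersection, so there are at most $t$ of them, while the fixed points outside the union number at most $v-|B_{0}\cup B_{1}\cup B_{2}|=v-3n-t$. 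Adding the two contributions gives $f\leq t+(v-3n-t)=v-3n$, the desired bound; here $v-3n-t\geq 0$ automatically, since the union is a set of points in $\mathcal{D}$.

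I do not anticipate a serious obstacle in this argument. The only steps needing care are the existence of a block-cycle of length $\geq 3$ (which fails precisely for involutions, matching the hypothesis) and the clean cancellation of the unknown triple-intersection term $t$, which is what makes the final inequality independent of $t$. If one wished to recover the sharper or more general form attributed to Feit \cite{F}, the natural continuation would be to replace the three-block orbit by a longer one and to track the higher intersection numbers more carefully; but for the stated three-block bound this short count is enough.
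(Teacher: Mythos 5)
Your proposal is correct and follows essentially the same route as the paper: select three distinct blocks from a single $\sigma$-orbit on blocks, observe that a fixed point lying in one of them lies in all three, and use inclusion--exclusion with the symmetric-design intersection number $\lambda$ so that the triple-intersection term cancels, yielding $f\leq v-3n$. Your explicit justification (via the equality of cycle structures on points and blocks) that the hypothesis on the order of $\sigma$ guarantees a block-cycle of length at least $3$ is a small point the paper leaves implicit.
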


\begin{proof}
Take $l$ to be the length of a longest cycle in the action of $\sigma $ on
the blocks of $\mathcal{D}$, and let $B_{1}$, $B_{2}$, and $B_{3}$ be three
distinct blocks in that cycle. With $\left\vert X\right\vert $ denoting the
size of a set $X$, we have $\left\vert B_{1}\cup B_{2}\cup B_{3}\right\vert
=3k-3\lambda +\left\vert B_{1}\cap B_{2}\cap B_{3}\right\vert $. Since any
fixed point of $\sigma $ in one of the $B_{i}$ is in all three, there are no
fixed points in $B_{1}\cup B_{2}\cup B_{3}-B_{1}\cap B_{2}\cap B_{3}$. Thus%
\begin{equation*}
f\leq v-\left\vert B_{1}\cup B_{2}\cup B_{3}-B_{1}\cap B_{2}\cap
B_{3}\right\vert =v-3(k-\lambda )=v-3n.
\end{equation*}
\end{proof}

Suppose that equality holds in the three-block bound: $f=v-3n$. Continue
with the notation in the proof, and let $B$ be a representative block in the
chosen longest cycle of $\sigma $. Then the complement of $B_{1}\cup
B_{2}\cup B_{3}-B_{1}\cap B_{2}\cap B_{3}$ is the set $F$ of fixed points of 
$\sigma $; and $B_{1}\cap B_{2}\cap B_{3}$ must be the set $F_{0}$ of fixed
points in $B$, which is the set of fixed points in each $B_{i}$. The set $%
W=B_{1}\cup B_{2}\cup B_{3}-B_{1}\cap B_{2}\cap B_{3}$ is the union of all
the orbits of $\sigma $ other than the fixed points. Each such orbit must
accordingly have a point in $B$. Moreover, $B$ cannot contain a complete
such orbit, for then it would be in all the $B_{i}$ and so in $B_{1}\cap
B_{2}\cap B_{3}$ and yet not consist of fixed points. In particular, $\sigma 
$ cannot have a cycle with length $t>1$ and relatively prime to $l$. Because 
$W=B_{1}\cup B_{2}\cup B_{3}-B_{1}\cap B_{2}\cap B_{3}$, $B_{3}\supseteq
F_{0}\cup (W-(B_{1}\cup B_{2}))$. With $\left\vert F_{0}\right\vert =f_{0}$,
then since $\left\vert W\right\vert =3n$ and $F_{0}$ is disjoint from $W$, $%
\left\vert F_{0}\cup (W-(B_{1}\cup B_{2}))\right\vert =2f_{0}+k-2\lambda $.
If $l\geq 4$, the same holds for a fourth block $B_{4}$ in the orbit, in
place of $B_{3}$. Then $B_{3}\cap B_{4}\supseteq F_{0}\cup (W-(B_{1}\cup
B_{2}))$, so that $\lambda \geq 2f_{0}+k-2\lambda $, and $f_{0}\leq k-3n/2$.

Assume now that $l\geq 5$. If $B$ contains a point $x$ in a 2-cycle of $%
\sigma $, then $x\in B\cap B^{\sigma ^{2}}\cap B^{\sigma ^{4}}$ but $x\notin
F_{0}$. Similarly, if $x$ and $y$ are together in a 3- or 4- cycle of $%
\sigma $ and in $B$, then with proper choice of the $B_{i}$, one would find $%
x\in B_{1}\cap B_{2}\cap B_{3}$. So 2-cycles of $\sigma $ on points do not
meet $B$ at all, and 3- or 4- cycles meet $B$ at most once. Likewise, if $%
x,y,z\in B$ are in a common $t$-cycle of $\sigma $ with $t\geq 5$, then as $%
t\leq l$, $x\in B_{1}\cap B_{2}\cap B_{3}$ for certain $B_{i}$. Thus a $t$%
-cycle of $\sigma $ with $t\geq 5$ meets $B$ at most twice.

Now let $c_{t}$ be the number of $t$-cycles of $\sigma $ for $t>1$. Then the
preceding considerations give 
\begin{equation}
k\leq f_{0}+c_{3}+c_{4}+2\sum_{t\geq 5}c_{t}.  \label{ineq1}
\end{equation}%
Since $v=f+\sum_{t\geq 2}tc_{t}$, we have 
\begin{equation}
3n=v-f=\sum_{t\geq 2}tc_{t}\geq 5/2(c_{3}+c_{4}+2\sum_{t\geq 5}c_{t}).
\label{ineq2}
\end{equation}%
Therefore $f_{0}\geq k-6n/5$. But $f_{0}\leq k-3n/2$, and that is a
contradiction. Thus:

\begin{corollary}
If equality holds in the three-block inequality, then the length $l$ of the
longest cycle in $\sigma $ is at most 4. Moreover, the order of $\sigma $ is 
$l$.
\end{corollary}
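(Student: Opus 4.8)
The plan is to verify the two assertions in turn, drawing on the cycle analysis already developed under the equality hypothesis $f=v-3n$.

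For the bound $l\leq 4$, everything needed is in place. Supposing $l\geq 5$, the counting inequalities (\ref{ineq1}) and (\ref{ineq2}) combine to yield $f_{0}\geq k-6n/5$, while the four-block comparison available whenever $l\geq 4$ gives $f_{0}\leq k-3n/2$. Because the design has positive order $n>0$, we have $6n/5<3n/2$, so these two bounds are contradictory; hence $l\geq 5$ cannot occur and $l\leq 4$. This part is therefore just the conclusion of the paragraph immediately preceding the corollary.

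For the statement about the order, I would first pin down which values of $l$ remain. The three-block bound is invoked only when $\sigma$ has order at least 3, and the order of a permutation is the least common multiple of its cycle lengths; a longest cycle of length at most 2 would force the order to be at most 2. Thus $l\geq 3$, and combined with the first part $l\in\{3,4\}$. I would then appeal to the observation recorded in the setup, namely that under equality $\sigma$ has no cycle of length $t>1$ relatively prime to $l$. If $l=3$, then $\gcd(2,3)=1$ rules out 2-cycles, so every cycle length lies in $\{1,3\}$ and the order is exactly 3; if $l=4$, then $\gcd(3,4)=1$ rules out 3-cycles, so every cycle length lies in $\{1,2,4\}$ and the order is exactly 4. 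In either case the order equals $l$.

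I do not expect a genuine obstacle here: the analytic heavy lifting was finished before the corollary was stated, so no new inequality is required. The only point demanding attention is to recognize that the coprimality restriction already established is precisely what eliminates the single offending cycle length in each case—2-cycles when $l=3$ and 3-cycles when $l=4$—so that the least common multiple of the surviving lengths collapses to $l$ itself.
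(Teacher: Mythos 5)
Your proof is correct and follows the paper's own route: the bound $l\leq 4$ is exactly the contradiction derived in the paragraph preceding the corollary, and the order statement is obtained, as the paper indicates, from the established fact that no cycle of length $t>1$ can be relatively prime to $l$. Your explicit case split $l\in\{3,4\}$ merely spells out what the paper leaves as a one-line remark.
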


\noindent The order statement follows from the comment that cycle lengths
greater than 1 must be relatively prime to $l$.

It is not hard to find designs with automorphisms of order 3 or 4 meeting
the three-block bound. For example, let $Q$ be a normalized Hadamard matrix
of order 4 (see any of \cite{H, IS, L}, for instance). It has an
automorphism of order 3 fixing the first row and first column. If $H$ is an
Hadamard matrix of order $h$, then the Kronecker product $Q\otimes H$
inherits an automorphism of order 3 fixing the first $h$ rows and columns.
The corresponding Hadamard design is a $(4h-1,2h-1,h-1)$ symmetric design of
order $h$ with an automorphism of order 3 having $h-1=(4h-1)-3h$ fixed
points. Similarly, a normalized Hadamard matrix $E$ of order 8 has an
automorphism of order 4 fixing the first two rows and columns, switching the
third and fourth, and cycling the last four (on proper ordering; this may be
seen from the construction of $E$ as a Sylvester matrix). Then $E\otimes H$
leads to an $(8h-1,4h-1,2h-1)$ Hadamard design of order $2h$ having an
automorphism of order 4 with both 2-cycles and 4-cycles and $%
2h-1=(8h-1)-3\times (2h)$ fixed points.

Our interest here is that appropriate Fibonacci designs may be candidates
for designs with automorphisms meeting the three-block bound. Unfortunately,
there is not much evidence! The design $\mathcal{F}_{3}$ has an automorphism
of order 3 with $1=4-3\times 1$ fixed points. More interestingly, several $%
\mathcal{F}_{5}$ designs can be constructed to have an automorphism of order
3 with $7=25-3\times 6$ fixed points. For example, in the notation of \cite%
{B}, Bhattacharya's design has the automorphism%
\begin{equation*}
(X_{1}Y_{1}Z_{1})(X_{2}Y_{2}Z_{2})(X_{3}Y_{3}Z_{3})(X_{4}Y_{4}Z_{4})(V_{1}V_{2}V_{3})(W_{1}W_{3}W_{2})
\end{equation*}%
on points.

So can such an $\mathcal{F}_{7}$, that is, a $(169,64,24)$ design, be
created? It is tantalizing to observe that the parameters of the $(41,16,6)$
design constructed by Bridges, Hall, and Hayden \cite{BHH} (which, in fact,
has an automorphism of order 3 with $11=41-3\times 10$ fixed points) and $%
(169,64,24)$ are given by $t=2$ and $t=3$ in the sequence%
\begin{equation*}
v=\frac{2q(q^{t}-1)}{q-1}+1,\ k=q^{t},\ \lambda =\frac{1}{2}q^{t-1}(q-1)
\end{equation*}%
for $q=4$. This same parameter sequence, but with $q$ a power of an \emph{odd%
} prime, corresponds to a family of designs discovered by A. E. Brouwer that
is presented in \cite[Section 11.8]{IS}. Incidentally, this parameter set
does not always pass the Bruck-Ryser-Chowla test. One has $n=q^{t-1}(q+1)/2$
and $v\equiv 1\,(\bmod\ 4)$, so the Diophantine equation is%
\begin{equation*}
2Z^{2}=q^{t-1}((q+1)X^{2}+(q-1)Y^{2}).
\end{equation*}%
If $q$ is an even power of 2 or $t$ is even, this has the solution $%
X=1,Y=1,Z=\sqrt{q^{t}}$. If neither condition holds, $q^{t-1}$ is a square
that can be absorbed into $X^{2}$ and $Y^{2}$. The solvability criterion of
Legendre here comes down to the requirement that $-1$ be a square modulo
each prime divisor $p$ of $(q+1)^{\ast }$ (see \cite[pp. 45--46]{L}, among
others); that is, that $p\equiv 1\,(\bmod\ 4)$. For odd powers $q$ of 2, it
is a pleasant exercise to show that only $q=8$ passes: $2\times
12^{2}=9\times 5^{2}+7\times 3^{2}$.

\section{Some results from the paper \protect\cite{WD}\label{WD}}

The affine variety $\mathcal{D}$ in $\mathbb{R}^{5}$ is defined by the two
design equations $vr=bk$ and $r(k-1)=\lambda (v-1)$. A point $\Delta
=(v,b,r,k,\lambda )$ on $\mathcal{D}$ will be referred to as a
\textquotedblleft design,\textquotedblright\ even though it may not
correspond to a genuine block design. Let $Q(\Delta )=Q=r^{2}-b\lambda $.
Then the subvariety $\mathcal{Q}$ of $\mathcal{D}$ defined by $Q=0$ is the
union of four planes:%
\begin{equation*}
\begin{tabular}{ll}
$\Pi _{0}:$ & $b=0,r=0,\lambda =0$ \\ 
$\Pi _{1}:$ & $b=0,r=0,v=1$ \\ 
$\Pi _{2}:$ & $r=0,\lambda =0,k=0$ \\ 
$\Pi _{3}:$ & $b=r=\lambda ,v=k.$%
\end{tabular}%
\end{equation*}%
There are three more planes in $\mathcal{D}$:%
\begin{equation*}
\begin{tabular}{ll}
$\Pi _{4}$ & $v=0,k=0,r=\lambda $ \\ 
$\Pi _{5}$ & $v=1,k=1,b=r$ \\ 
$\Pi _{6}$ & $v=1,k=0,r=0.$%
\end{tabular}%
\end{equation*}%
It seems natural to refer to the designs on these seven planes as degenerate.

The main result of the paper is that on each nondegenerate design $\Delta
_{0}\in \mathcal{D}$, there are exactly four lines that lie in $\mathcal{D}$%
. If $\Delta _{0}=(v_{0},b_{0},r_{0},k_{0},\lambda _{0})$, then one of these
lines is the \emph{replication line}%
\begin{equation*}
v=v_{0},\ b=tb_{0},\ r=tr_{0},\ k=k_{0},\ \lambda =t\lambda _{0},
\end{equation*}%
parameterized by $t$. The other three lines are specified by two parameters, 
$f$ and $p$. For two of the lines, called the $F_{0}$- and $F_{1}$-lines,
the parameter $f$ is the \emph{Fisher factor} of any design on the line: $%
b=fv$. For a genuine design that has $r\neq \lambda $, Fisher's inequality
is that $f\geq 1$ \cite[p. 129]{H}. The fourth line is designated as the $P$%
-line on $\Delta _{0}$; the second parameter $p$ is a bit obscure.

These lines meet certain of the planes $\Pi _{i}$. Suppose that $\Delta _{0}$
is a nondegenerate design and that $\Lambda $, one of the $F_{i}$- or $P$%
-lines on $\Delta _{0}$, meets a plane $\Pi _{j}$. Then if a linear relation%
\begin{equation*}
Vv+Bb+Rr+Kk+L\lambda =A
\end{equation*}%
holds on $\Pi _{j}$ and at $\Delta _{0}$, the relation will hold for all $%
\Delta $ on $\Lambda $. For example, the Metis design relation $v=r+k+1$
holds on the plane $\Pi _{6}$, and all the $F_{1}$-lines meet $\Pi _{6}$. It
follows that the designs on the $F_{1}$-line through a Metis design are all
Metis designs. The paper contains other examples of lines in $\mathcal{D}$
corresponding to families of designs characterized by such linear relations.

\end{document}